\DeclareMathOperator*{\argmin}{arg\,min}
\DeclareMathOperator*{\diag}{diag}
\newtheorem{theorem}{Theorem}
\newcommand\widebar[1]{\mathop{\overline{#1}}}
\newcommand{\reffig}[1]{Fig.\,\ref{#1}}
\newcommand\restr[2]{{
  \left.\kern-\nulldelimiterspace 
  #1 
  \vphantom{\big|} 
  \right|_{#2} 
  }}
\title{Region-based motion-compensated iterative reconstruction technique for dynamic computed tomography}
\name{Anh-Tuan Nguyen, Jens Renders, Jan Sijbers, and Jan De Beenhouwer}
\address{imec-Vision Lab, Department of Physics, and $\mu$NEURO Research Centre of Excellence \\ University of Antwerp, Universiteitsplein 1, 2610 Wilrijk, Belgium}
\begin{document}

\pdfimageresolution=300

\pagestyle{empty} 

\maketitle
\newpage
This work has been submitted to the IEEE for possible publication. Copyright may be transferred without notice, after which this version may no longer be accessible.
\begin{abstract}
Current state-of-the-art motion-based dynamic computed tomography reconstruction techniques estimate the deformation by considering motion models in the entire object volume although occasionally the proper change is local. In this article, we address this issue by introducing the region-based Motion-compensated Iterative Reconstruction Technique (rMIRT). It aims to accurately reconstruct the object being locally deformed during the scan, while identifying the deformed regions consistently with the motion models. Moreover, the motion parameters that correspond to the deformation in those areas are also estimated. In order to achieve these goals, we consider a mathematical optimization problem whose objective function depends on the reconstruction, the deformed regions and the motion parameters. The derivatives towards all of them are formulated analytically, which allows for efficient reconstruction using gradient-based optimizers. To the best of our knowledge, this is the first iterative reconstruction method in dynamic CT that exploits the analytical derivative towards the deformed regions.
\end{abstract}

\begin{keywords}
Dynamic computed tomography, four-dimensional computed tomography (4DCT), motion parameter estimation, region estimation.
\end{keywords}

\section{Introduction} 
Dynamic computed tomography is a major part of CT imaging that studies the structure of dynamic objects in CT scans. State-of-the-art motion-based dynamic CT reconstruction techniques mostly consider the motion in the entire object volume (e.g., \cite{Zehni20, Nguyen22_SPIE}), while in real applications (e.g., lung tissue \cite{Soliman16}) only local regions are deformed. This concern was recently mentioned in \cite{Ruymbeek20}. Although several former reconstruction methods were designed to estimate the deformed regions (e.g., \cite{VanEyndhoven14, VanEyndhoven15, Kazantsev15}), those methods do not consider affine motion models.

In this paper, we consider a dynamic CT model for which, in the object volume, there are local regions deformed by affine motion models, while the complementary regions that remain static during the entire acquisition scan. We then propose an iterative method that aims not only to accurately reconstruct the scanned object that contains these locally deformed regions, but also to identify them. Furthermore, the motion parameters corresponding to the deformation are estimated simultaneously with the reconstruction and region estimation. The contributions are summarized as follows:
\begin{itemize}
    \item Mathematical formulation of the class of dynamic CT problems that consider affine motions, which model the deformation in local areas characterized by corresponding binary masks.
    \item Gradient method that aims to minimize an objective function that depends on the reconstruction, the motion parameters and the deformed regions, whose partial derivatives towards all of them are formulated analytically.
    \item The biconvexity of the objective function towards the reconstruction and the locally deformed regions that supports the convergence of the iterative schemes in the proposed gradient method. 
\end{itemize}

\section{Proposed method}
A dynamic CT image can be represented as a sequence of $n$ images $\bm{x}_1$, $\bm{x}_2$, ..., $\bm{x}_n$, each representing the object at a given point in time. The acquisition can be observed as a collection of finite subscans, where the object is assumed to be static during each subscan. Here, a subscan refers to one or more consecutively acquired projections. This procedure can be mathematically modelled as $n$ systems of linear equations:  
\begin{equation}\label{eq:forward_model_subscan}
    \bm{W}_i \bm{x}_i = \bm{b}_i, \hbox{ for } i = 1,..., n,
\end{equation}
where $\bm{W}_i$ and $\bm{b}_i$ are the projection operator and the projection data corresponding to the $i^{th}$ subscan, respectively. These may be interpreted as a single system of the forward model:
\begin{equation}\label{eq:forward_model}
    \begin{bmatrix}
        \bm{W}_1 &   0      & 0      & 0 \\
          0      & \bm{W}_2 & 0      & 0 \\
          0      &   0      & \ddots & 0\\
          0      &   0      &     0  & \bm{W}_n
    \end{bmatrix}
    \begin{bmatrix}
        \bm{x}_1 \\
        \bm{x}_2 \\
        \vdots \\
        \bm{x}_n
    \end{bmatrix}
    =
    \begin{bmatrix}
        \bm{b}_1 \\
        \bm{b}_2 \\
        \vdots \\
        \bm{b}_n
    \end{bmatrix}.
\end{equation}

Let $\bm{\alpha}_i \in \left \{0, 1 \right \}^N$ be a binary mask, which encodes the local region of the unknown original image $\bm{x} \in \left[0, 1 \right]^N$ that appears deformed in the image $\bm{x}_i$. Assume the deformation can be modelled by an affine motion model $M$ that depends on the motion parameter $\bm{p}_i \in \mathbb{R}^M$, the deformed object in the $i^{th}$ subscan can be modelled as follows:
\begin{equation}\label{eq:deformed_image}
    \bm{x}_{i} = {{\widebar{\bm{\alpha}_i}} \circ \bm{x}} + M\left(\bm{p}_i \right) {\left(\bm{\alpha}_i \circ \bm{x} \right)},
\end{equation}
where $\widebar{\bm{\alpha}_i} := \bm{1} - \bm{\alpha}_i$ and $\circ$ is the commutative Hadamard product. In this model, the static part ${\widebar{\bm{\alpha}_i}} \circ \bm{x}$ of $\bm{x}$ remains conserved in the deformed object $\bm{x}_i$, while the dynamic part ${\bm{\alpha}_i} \circ \bm{x}$ appears distorted under the motion model $M$.\\
\\
By substituting the equation \eqref{eq:deformed_image} to \eqref{eq:forward_model} for all $n$, the forward model of the entire projection data then becomes:
\begin{equation}
    \begin{bmatrix}
        \bm{W}_1 & 0 & 0 & 0 \\
        0 & \bm{W}_2 & 0 & 0 \\
        0 & 0 & \ddots & 0 \\
        0 & 0 & 0 & \bm{W}_n
    \end{bmatrix}
    \begin{bmatrix}
        {\widebar{\bm{\alpha}_1}}  \circ \bm{x} + M\left(\bm{p_1} \right) \left(\bm{\alpha _1} \circ \bm{x} \right)\\
        {\widebar{\bm{\alpha}_2}}  \circ \bm{x} + M\left(\bm{p_2} \right) \left(\bm{\alpha _2} \circ \bm{x} \right)\\
        \vdots \\
        {\widebar{\bm{\alpha}_n}}  \circ \bm{x} + M\left(\bm{p_n} \right)\left(\bm{\alpha _n} \circ  \bm{x} \right)
    \end{bmatrix}
    = 
    \begin{bmatrix}
        \bm{b}_1 \\
        \bm{b}_2 \\
        \vdots \\
        \bm{b}_n
    \end{bmatrix}.
\end{equation}
This can be concisely rewritten as a single system:
\begin{equation}\label{eq:rMIRT_equation}
    \bm{W}\left\{ {\widebar{\bm{\alpha}}} [\circ] \bm{x} + \bm{M}\left(\bm{p}\right) \left(\bm{\alpha} [\circ] \bm{x}\right) \right\} = \bm{b},
\end{equation}
where 
\begin{equation}
    \bm{W} = 
    \begin{bmatrix}
        \bm{W}_1 & 0 & 0 & 0 \\
        0 & \bm{W}_2 & 0 & 0 \\
        0 & 0 & \ddots & 0 \\
        0 & 0 & 0 & \bm{W}_n
    \end{bmatrix},
\end{equation}
\begin{equation}
    \bm{\alpha} = 
    \begin{bmatrix}
        \bm{\alpha}_1 \\
        \bm{\alpha}_2 \\
        \vdots \\
        \bm{\alpha}_n
    \end{bmatrix}, 
    \bm{p} = 
    \begin{bmatrix}
        \bm{p}_1 \\
        \bm{p}_2 \\
        \vdots \\
        \bm{p}_n
    \end{bmatrix},
    \bm{b} = 
    \begin{bmatrix}
        \bm{b}_1 \\
        \bm{b}_2 \\
        \vdots \\
        \bm{b}_n
    \end{bmatrix},
\end{equation}
$[\circ]$ is the modified version of the penetrating face product \cite{Slyusar99} between the two column vectors $\bm{\alpha} \in \left\{0, 1\right \}^{nN}$ and $\bm{x} \in \left[0, 1 \right]^N$ defined by
\begin{equation}
    \bm{\alpha} [\circ] \bm{x} = 
    \begin{bmatrix}
        \left[\bm{\alpha}_1 \circ \bm{x} \right]^T, \left[\bm{\alpha}_2 \circ \bm{x} \right]^T, \hdots, \left[\bm{\alpha}_n \circ \bm{x} \right]^T 
    \end{bmatrix}^T,
\end{equation}
and
\begin{equation}
    \bm{M}(\bm{p}) = 
    \begin{bmatrix}
        M(\bm{p}_1) & 0 & 0 & 0 \\
        0 & M(\bm{p}_2) & 0 & 0 \\
        0 & 0 & \ddots & 0 \\
        0 & 0 & 0 & M(\bm{p}_n)
    \end{bmatrix}.
\end{equation}

In order to solve the equation \eqref{eq:rMIRT_equation}, let us consider the following constrained optimization problem as a modified and extended version of \cite{Zehni20, Nguyen22_SPIE, VanEyndhoven12}:
\begin{equation}\label{rMIRT_problem}
    \left[\bm{x}^*, \bm{\alpha}^*, \bm{p}^*\right] = \argmin_{\bm{x} \in \left [0, 1 \right]^N, \bm{\alpha} \in \left \{0, 1 \right \}^{nN}, \bm{p} \in \mathbb{R}^{nM}} f \left(\bm{x}, \bm{\alpha}, \bm{p} \right),
\end{equation}
where 
\begin{equation}\label{eq:rMIRT_objective_function}
    f \left(\bm{x}, \bm{\alpha}, \bm{p}\right) = \frac{1}{2}\left\|\bm{W}\left\{{\widebar{\bm{\alpha}}}  [\circ] \bm{x} + \bm{M}\left(\bm{p}\right) \left(\bm{\alpha} [\circ] \bm{x}\right) \right\} - \bm{b}\right \|_2^2.
\end{equation}

The problem \eqref{rMIRT_problem} can be solved by the iterative schemes presented in Algorithm 1 with the intermediate approximated value of $\bm{\alpha}$ is projected onto the set $\mathcal{S} \equiv \left \{0, 1 \right \}^{nN}$ to obtain the intermediate deformed regions, after which the center of motion is updated.

\begin{algorithm}[h!]\label{alg:algorithm_rMIRT}
\DontPrintSemicolon
\KwIn{Projection $\bm{b}$, projector $\bm{W}$, motion model $\bm{M}$, $\bm{p}^0 \equiv \text{motion parameters in the static case}$, $\bm{x}^0 \equiv \text{motion-uncompensated reconstruction}$, $\bm{\alpha}^0 \equiv \text{observed dynamic region encoder}$, number of iterations $n_{iter}$.}
\KwOut{Reconstruction with region-based motion compensation, locally deformed regions, motion parameters.}

\SetKwBlock{kwFor}{for}{end for}
\kwFor($i = 0:n_{iter}-1$)
{
    $\bm{x}^{i+1} = \bm{x}^{i} - \gamma_{\bm{x}}^i \nabla_{\bm{x}} f \left(\bm{x}^i, \bm{\alpha}^{i}, \bm{p}^i \right)$\\
    $\bm{p}^{i+1} = \bm{p}^{i} - \gamma_{\bm{p}}^i \nabla_{\bm{p}} f \left(\bm{x}^i, \bm{\alpha}^{i}, \bm{p}^i \right)$\\
    $\bm{\alpha}^{i+1} = \bm{\alpha}^{i} - \gamma_{\bm{\alpha}}^i \nabla_{\bm{\alpha}} f \left(\bm{x}^i, \bm{\alpha}^{i}, \bm{p}^i \right)$\\
    Update the center of motion from $\text{Proj}_{\mathcal{S}}\left(\bm{\alpha}^{i+1}\right)$\\
}
\caption{rMIRT}
\end{algorithm}

\noindent The gradient of the objective function is analytically given by $\nabla f = \left[ \left [ \nabla_{\bm{x}} f \right ]^T, \left [ \nabla_{\bm{\alpha}} f \right ]^T, \left [ \nabla_{\bm{p}} f \right ]^T   \right ]^T$, with
\begin{align}\nonumber
    \nabla_{\bm{x}} f =& \left\{ \left[
    \left( \bm{M}(\bm{p}) - \bm{I} \right) \diag \left\{ \bm{\alpha} \right \} + \bm{I} \right]
    \underbrace{
    \begin{bmatrix}
        I \\
        I \\
        \vdots \\
        I
    \end{bmatrix}}_\text{$n$ blocks $I$} \right \}^T \\
    & \times \bm{W}^T \bm{r}, \\ \nonumber
    \nabla_{\bm{\alpha}} f =& \left \{ \left[\bm{M}(\bm{p}) - \bm{I} \right] \underbrace{
    \begin{bmatrix}
    \diag \left\{\bm{x} \right \} & 0 & 0 \\
    0 & \ddots & 0 \\
    0 & 0 & \diag \left\{\bm{x} \right \}
    \end{bmatrix}}_\text{$n$ blocks $\diag \left\{\bm{x} \right \}$} \right \}^T \\
    & \times \bm{W}^T \bm{r}, \\
    \nabla_{\bm{p}} f =& \left[\nabla \bm{M}(\bm{p})\left(\bm{\alpha} [\circ] \bm{x} \right)\right]^T\bm{W}^T\bm{r}, \mbox{ where}
\end{align}
\begin{equation}
\diag \left \{ \bm{\alpha} \right \} := 
    \begin{bmatrix}
    \diag \left \{ \bm{\alpha}_1 \right \} & 0 & 0 \\
    0 & \ddots & 0 \\
    0 & 0 & \diag \left \{ \bm{\alpha}_n \right \}
    \end{bmatrix},
\end{equation}
\begin{equation}
\bm{I} = \underbrace{
    \begin{bmatrix}
    I & 0 & 0 & 0 \\
    0 & I & 0 & 0 \\
    0 & 0 & \ddots & 0 \\
    0 & 0 & 0 & I
    \end{bmatrix}}_\text{$n$ blocks $I$},
\end{equation}
and $\bm{r}$ is the residual of the system \eqref{eq:rMIRT_equation} given as the following:
\begin{equation}\label{rMIRT_residual}
    \bm{r} = \bm{W}\left\{ {\widebar{\bm{\alpha}}} [\circ] \bm{x} + \bm{M}\left(\bm{p}\right) \left(\bm{\alpha} [\circ] \bm{x}\right) \right\} - \bm{b}.
\end{equation}
The operators $\bm{M} \left( \bm{p} \right)$, $\bm{M} \left(\bm{p}\right)^T$ and $\nabla \bm{M}\left(\bm{p} \right)$ are all provided by a matrix-free and GPU-accelerated implementation of cubic image warping, its adjoint and its derivatives \cite{Renders21} designed to study continuous and differentiable affine motions. The operators $\bm{W}$ and $\bm{W}^T$ of the CT system are provided by the ASTRA Toolbox \cite{VanAarle15}.

The objective function of the proposed method is non-convex towards the motion parameters $\bm{p}$. Nonetheless, the convergence of the iterative parameter estimation scheme was validated in \cite{ Nguyen22_SPIE}. The convergence of the reconstruction and region encoder estimation schemes is supported by the following property, which shows the biconvexity of the objective function towards the reconstruction $\bm{x}$ and the region encoder $\bm{\alpha}$.

\begin{theorem}
Let us assume the domain of the objective function $f$ is extended to $\left[0, 1 \right]^{nN}$, $f$ is then biconvex towards the reconstruction $\bm{x}$ and the region encoder $\bm{\alpha}$. 
\end{theorem}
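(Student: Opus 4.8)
The plan is to reduce the biconvexity claim to the elementary fact that the composition of the convex function $\bm{y}\mapsto\tfrac12\norm{\bm{y}}_2^2$ with an affine map is convex. Recall that biconvexity of $f$ towards $\bm{x}$ and $\bm{\alpha}$ means two things: for every fixed pair $(\bm{\alpha},\bm{p})$ the map $\bm{x}\mapsto f(\bm{x},\bm{\alpha},\bm{p})$ is convex on the convex set $\left[0,1\right]^N$, and for every fixed pair $(\bm{x},\bm{p})$ the map $\bm{\alpha}\mapsto f(\bm{x},\bm{\alpha},\bm{p})$ is convex on the convex set $\left[0,1\right]^{nN}$ (the latter set is convex precisely because of the domain relaxation assumed in the statement; on the original discrete set $\{0,1\}^{nN}$ the notion would be vacuous). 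Hence it suffices to show that the vector
\[
\bm{W}\left\{\widebar{\bm{\alpha}}[\circ]\bm{x} + \bm{M}(\bm{p})\left(\bm{\alpha}[\circ]\bm{x}\right)\right\} - \bm{b}
\]
depends affinely on $\bm{x}$ when $(\bm{\alpha},\bm{p})$ is held fixed, and affinely on $\bm{\alpha}$ when $(\bm{x},\bm{p})$ is held fixed.

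For the first direction I would use that the modified penetrating face product $[\circ]$ is bilinear: by its definition it stacks the Hadamard products $\bm{\alpha}_i\circ\bm{x}$, and the Hadamard product is linear in each of its two factors separately. Thus, with $\bm{\alpha}$ fixed, both $\widebar{\bm{\alpha}}[\circ]\bm{x}$ and $\bm{\alpha}[\circ]\bm{x}$ are linear in $\bm{x}$; applying the linear operators $\bm{M}(\bm{p})$ and $\bm{W}$ and subtracting the constant $\bm{b}$ produces an affine function of $\bm{x}$, whose squared Euclidean norm (times $\tfrac12$) is therefore convex on $\left[0,1\right]^N$.

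For the second direction I would rewrite the bracketed argument blockwise using $\bm{\alpha}_i\circ\bm{x}=\diag\{\bm{x}\}\bm{\alpha}_i$ and $\widebar{\bm{\alpha}_i}\circ\bm{x}=\bm{x}-\diag\{\bm{x}\}\bm{\alpha}_i$, so that its $i$-th block becomes $\bm{x}+\left(M(\bm{p}_i)-I\right)\diag\{\bm{x}\}\bm{\alpha}_i$, which is manifestly affine in $\bm{\alpha}_i$. Stacking over $i$ and then applying $\bm{W}$ and subtracting $\bm{b}$ preserves affineness in $\bm{\alpha}$, so once more the composition with $\tfrac12\norm{\cdot}_2^2$ is convex, now on $\left[0,1\right]^{nN}$. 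Equivalently, one can read the Hessians off the gradient formulas already derived: both $\nabla_{\bm{x}}^2 f$ and $\nabla_{\bm{\alpha}}^2 f$ are of the form $A^T\bm{W}^T\bm{W}A$ for an appropriate constant matrix $A$, hence positive semidefinite.

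I do not expect a real obstacle. The only points requiring care are the bilinearity of $[\circ]$ — which is what makes freezing one of $\bm{x}$, $\bm{\alpha}$ genuinely linearize the dependence on the other — and the fact that the relaxed domain $\left[0,1\right]^{nN}$ is convex, which is exactly why the statement assumes the domain extension. I would also note explicitly that joint convexity in $(\bm{x},\bm{\alpha})$ fails, since the interaction through $\bm{\alpha}[\circ]\bm{x}$ is bilinear rather than jointly convex; this is precisely why the conclusion is biconvexity and nothing stronger, and I would not try to prove more.
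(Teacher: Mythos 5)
Your proposal is correct and follows essentially the same route as the paper: in both cases the key observation is that, with the other variables frozen, the residual inside the norm is affine in $\bm{x}$ (via the linearity of $[\circ]$ in its second factor) and affine in $\bm{\alpha}$ (via the rewrite $\bm{\alpha}_i\circ\bm{x}=\diag\{\bm{x}\}\bm{\alpha}_i$, which is exactly the paper's matrix $\bm{Q}(\bm{x},\bm{p})$), so each restriction is a convex quadratic on a convex domain. If anything, your blockwise form $\bm{x}+\left(M(\bm{p}_i)-I\right)\diag\{\bm{x}\}\bm{\alpha}_i$ is slightly more careful than the paper's displayed restriction in $\bm{\alpha}$, which omits the constant term $\bm{W}$ applied to the stacked copies of $\bm{x}$ (harmless for convexity, since adding a constant to the affine map changes nothing).
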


\begin{proof}
The objective function \eqref{eq:rMIRT_objective_function} can be written as a quadratic form towards either the reconstruction variable $\bm{x}$ in the convex domain $\left[0, 1 \right]^n$ when $\bm{\alpha}$ and $\bm{\bm{p}}$ are fixed:
    \begin{equation}
        \restr{f}{\bm{\alpha}, \bm{p}}(\bm{x}) = \frac{1}{2} \left \| \bm{W} \bm{P} \left(\bm{\alpha}, \bm{p}\right) \bm{x} - \bm{b} \right \|_2^2,
    \end{equation}
with 
    \begin{equation}
    \bm{P} \left(\bm{\alpha}, \bm{p}\right) = \left[\left( \bm{M}(\bm{p}) - \bm{I} \right) \diag \left\{ \bm{\alpha} \right \} + \bm{I} \right]
    \underbrace{
    \begin{bmatrix}
        I \\
        I \\
        \vdots \\
        I
    \end{bmatrix}}_\text{$n$ blocks $I$}.
    \end{equation}
Similarly, in the extended convex domain  $\left[0, 1 \right]^{nN}$ of $\bm{\alpha}$ when $\bm{x}$ and $\bm{p}$ are fixed, it yields:
    \begin{equation}
        \restr{f}{\bm{x}, \bm{p}}(\bm{\alpha}) = \frac{1}{2} \left \| \bm{W} \bm{Q} \left(\bm{x}, \bm{p} \right) \bm{\bm{\alpha}} - \bm{b} \right \|_2^2,
    \end{equation}
with 
    \begin{equation}
        \bm{Q} \left(\bm{x}, \bm{p} \right) = \left[\bm{M}(\bm{p}) - \bm{I} \right] \underbrace{
    \begin{bmatrix}
    \diag \left\{\bm{x} \right \} & 0 & 0 \\
    0 & \ddots & 0 \\
    0 & 0 & \diag \left\{\bm{x} \right \}
    \end{bmatrix}}_\text{$n$ blocks $\diag \left\{\bm{x} \right \}$}.
    \end{equation}
Consequently, it is biconvex towards $\bm{x}$ and $\bm{\alpha}$.
\end{proof}

\section{Experiment and results}
\begin{figure*}[!ht]
    \begin{subfigure}{.24\linewidth}
        \includegraphics[width=\linewidth]{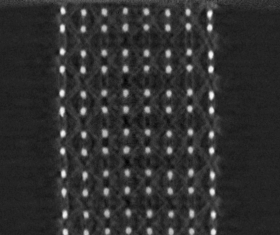}
        \caption{\centering Ground truth \newline}
    \end{subfigure}
    \begin{subfigure}{.24\linewidth}
        \includegraphics[width=\linewidth]{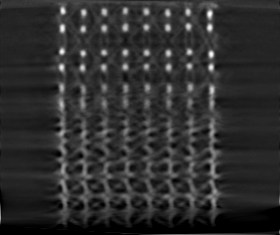}
        \caption{\centering without motion compensation \newline}
    \end{subfigure}
    \begin{subfigure}{.24\linewidth}
        \includegraphics[width=\linewidth]{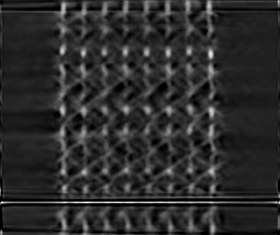}
        \caption{\centering without region-based motion compensation}
    \end{subfigure}
    \begin{subfigure}{.24\linewidth}
        \includegraphics[width=\linewidth]{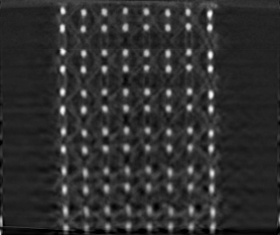}
        \caption{\centering with region-based motion compensation}
    \end{subfigure}
    \caption{\centering x-z cross-section of the reconstructions of the bone scaffold.}
\label{fig:recs}
\end{figure*}

We use a cylindrical bone scaffold of volume size $235 \times 280 \times 280$ (voxel) reconstructed from a real scan as the reference object. Projection data is simulated by generating $720$ uniformly-sampled cone beam projections spread over a full-rotation angular range. Gaussian noise with standard deviation of 1\% of the peak gray value of the projection data is added to the sinogram. We assume the object region from the top to the 50$^{th}$ horizontal cross-section slice to be static in all angular projections. The projections are captured at discrete angular time points and the motion is simulated as continuous constant scaling in all three dimensions y-z, x-z and x-y respectively with the scaling factors range from 1 to 0.99, 0.99 and 1.25 on the deformed area.

The experiment is considered in 5 subscans. We use the mean squared error to evaluate the reconstruction quality. The initial guess of the deformed region is the upper part of the object whose bottom z-coordinate is 55. At the $i^{th}$ iteration, the stepsizes $\gamma_{\bm{x}}^i$ and $\gamma_{\bm{p}}^i$ are chosen following the Barzilai-Borwein formula \cite{BB88} and the stepsize $\gamma_{\bm{\alpha}}^i$ is chosen constantly proportional to the quantity $1/i$. The z-coordinate of the center of motion is updated to be the z-coordinate of the bottom non-zero voxel of the intermediate estimated region encoder, when the x- and y- coordinates are in the center of the volume geometry. Convergence is achieved after around 15 iterations with a computation time of approximately 10 seconds per iteration. The reconstruction results are shown in \reffig{fig:recs}, and the behavior of the mean squared error is given in \reffig{fig:MSEs}. The reconstruction result of our method shows a clear improvement over the reconstruction without motion compensation and the reconstruction without region-based motion compensation \cite{Nguyen22_SPIE} wherein the deformation is supposed to appear in the entire volume of the object.

\begin{figure}[!ht]
    \includegraphics[width=\linewidth]{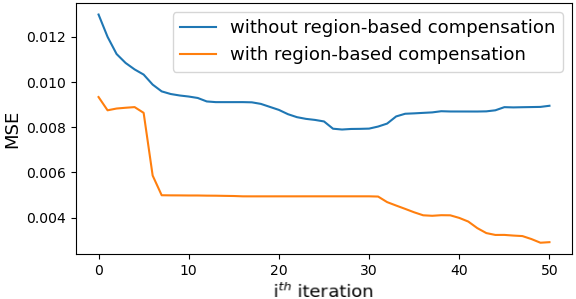}
    \caption{\centering Mean squared error of the reconstructions as a function
of the number of iterations.}
\label{fig:MSEs}
\end{figure}

\section{Conclusion and future work}
We have presented a reconstruction algorithm that combines accurate reconstruction, locally affine-deformed region identification and motion parameter estimation. Our method obtained a reconstruction result improved from the result of the reconstruction without motion compensation and the result of the reconstruction with the compensation of motions in the entire object volume \cite{Nguyen22_SPIE}. In future research, we aim to validate our method on real datasets and quantitatively compare the reconstruction and motion estimation result with the results from state-of-the-art methods. 


\section{Acknowledgement}
This study is partially supported by the Research Foundation-Flanders (FWO) (SBO grant no. S007219N and PhD grant no. 1SA2920N). The authors would like to thank Prof. Martine Wevers and Dr. Jeroen Soete for sharing the dataset.

\section{Compliance with ethical standards}
This is a numerical simulation study for which no ethical approval was required.

\bibliographystyle{IEEEbibetal}
\bibliography{references}

\end{document}